\title{On a vanishing theorem due to Bogomolov}
\author{Xiaojun WU}
\date{\today}
\newtheorem{mythm}{Theorem}
\newtheorem{mylem}{Lemma}
\newtheorem{myprop}{Proposition}
\newtheorem{mydef}{Definition}
\newtheorem{myrem}{Remark}
\begin{document}
\def\cI{\mathcal{I}}
\def\Z{\mathbb{Z}}
\def\Q{\mathbb{Q}}  \def\C{\mathbb{C}}
 \def\R{\mathbb{R}}
 \def\N{\mathbb{N}}
 \def\H{\mathbb{H}}
  \def\P{\mathbb{P}}
 \def\rC{\mathrm{C}}
  \def\nd{\mathrm{nd}}
  \def\d{\partial}
 \def\dbar{{\overline{\partial}}}
\def\dzbar{{\overline{dz}}}
 \def\ii{\mathrm{i}}
  \def\d{\partial}
 \def\dbar{{\overline{\partial}}}
\def\dzbar{{\overline{dz}}}
\def \ddbar {\partial \overline{\partial}}
\def\cD{\mathbb{D}}
\def\cE{\mathcal{E}}  \def\cO{\mathcal{O}}
\def\P{\mathbb{P}}
\def\cI{\mathcal{I}}
\def \loc{\mathrm{loc}}
\def \log{\mathrm{log}}
\def \Im{\mathfrak{Im}}
\def \cC{\mathcal{C}}
\bibliographystyle{plain}
\def \dim{\mathrm{dim}}
\def \RHS{\mathrm{RHS}}
\def \liminf{\mathrm{liminf}}
\def \ker{\mathrm{Ker}}
\maketitle
\begin{abstract}
In this note, we give a new proof of a vanishing result originally due to Bogomolov, and later generalised by Mourougane and Boucksom. The statement holds for arbitrary pseudoeffective line bundles over compact K\"ahler manifolds, under an assumption on the numerical dimension of the line bundle.
\end{abstract}

\section{Introduction}
Let $L$ be a holomorphic line bundle over a compact K\"ahler manifold $X$. One well known vanishing theorem due to Bogomolov \cite{Bog} (at least in the projective algebraic case) asserts that
$$H^0(X, \Omega^p_X \otimes L^{-1})=0$$
for $p < \kappa(L)$.
In \cite{Mou}, the following two versions of the Bogomolov vanishing theorem are stated.
\begin{mythm}
If $L$ is a nef line bundle over a compact K\"ahler manifold $X$, then
$$H^0(X, \Omega^p_X \otimes L^{-1})=0$$
for $p < \nd(L)$. 
\end{mythm}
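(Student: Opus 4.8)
The plan is to run a Bochner--Kodaira argument directly on the holomorphic $L^{-1}$-valued $p$-form. It suffices to prove the contrapositive: if there is a nonzero $s \in H^0(X, \Omega^p_X \otimes L^{-1})$, then $c_1(L)^{p+1}=0$ in $H^{2p+2}(X,\R)$, which for a nef class is equivalent to $\int_X c_1(L)^{p+1}\wedge\omega^{n-p-1}=0$ (where $\omega$ is a fixed Kähler form and $n=\dim X$), i.e. to $\nd(L)\le p$. Since $L$ is nef, for every $\epsilon>0$ I equip $L$ with a smooth metric $h_\epsilon$ whose Chern curvature form satisfies $\Theta_{h_\epsilon}(L)\ge -\epsilon\omega$. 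To $s$ and $h_\epsilon$ I attach the nonnegative $(p,p)$-form $T_\epsilon := i^{p^2}\{s,s\}_{h_\epsilon}$, the pointwise squared norm of $s$ measured in the induced metric on $L^{-1}$.

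The first step is a pointwise identity coming from $\bar\partial s=0$ together with the curvature relation $\bar\partial(D'_\epsilon s) = \Theta_{h_\epsilon}(L^{-1})\wedge s = -\Theta_{h_\epsilon}(L)\wedge s$ on $L^{-1}$, namely
\[ i\partial\bar\partial T_\epsilon = i^{(p+1)^2}\{D'_\epsilon s, D'_\epsilon s\}_{h_\epsilon} + \Theta_{h_\epsilon}(L)\wedge T_\epsilon, \]
where $D'_\epsilon$ is the $(1,0)$-part of the Chern connection and the first term on the right is again a nonnegative $(p+1,p+1)$-form. Wedging with $\omega^{n-p-1}$ and integrating, the left-hand side vanishes by Stokes' theorem and $d\omega=0$, so that
\[ \int_X i^{(p+1)^2}\{D'_\epsilon s,D'_\epsilon s\}_{h_\epsilon}\wedge\omega^{n-p-1} = -\int_X \Theta_{h_\epsilon}(L)\wedge T_\epsilon\wedge\omega^{n-p-1} \le \epsilon\int_X \omega^{n-p}\wedge T_\epsilon, \]
using $\Theta_{h_\epsilon}(L)\ge-\epsilon\omega$ and the weak positivity of $T_\epsilon\wedge\omega^{n-p-1}$.

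The second step is to let $\epsilon\to 0$. Granting a uniform bound on the masses $\int_X\omega^{n-p}\wedge T_\epsilon$, the left-hand side tends to $0$, so $s$ becomes asymptotically parallel for the Chern connection; extracting a limit I obtain a nonzero section with $D's=0$ and $\bar\partial s=0$. Feeding $D's=0$ back into $\bar\partial(D's)=-\Theta(L)\wedge s$ yields the crucial pointwise relation $\Theta(L)\wedge s=0$. Since $s$ is then a nowhere-vanishing $L^{-1}$-valued $(p,0)$-form, at each point the real $(1,1)$-form $\Theta(L)$ must annihilate a fixed nonzero $p$-covector; a linear-algebra computation forces $\Theta(L)$ to have rank at most $p$, hence $\Theta(L)^{p+1}=0$ pointwise and $c_1(L)^{p+1}=0$ in cohomology. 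This gives $\nd(L)\le p$, contradicting $p<\nd(L)$.

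The main obstacle is the passage to the limit in the second step: a merely nef (as opposed to semipositive) line bundle need not carry any smooth semipositive metric, so the approximating metrics $h_\epsilon$ may degenerate as $\epsilon\to 0$. Controlling the masses $\int_X\omega^{n-p}\wedge T_\epsilon$ and making rigorous sense of the limiting \emph{parallel} section---presumably through a careful normalization of the $h_\epsilon$ and a regularization of the nef class in the spirit of Demailly---is the technical heart of the argument, and is exactly the point where the pseudoeffective generalization will demand the numerical dimension defined via positive intersection products. A secondary point requiring care is the linear-algebra implication $\Theta(L)\wedge s=0\Rightarrow \mathrm{rank}\,\Theta(L)\le p$ when the form part of $s$ is not decomposable.
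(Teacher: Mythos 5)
Your strategy --- the $i\partial\dbar$-Bochner identity for $T_\varepsilon=i^{p^2}\{s,s\}_{h_\varepsilon}$, integration against $\omega^{n-p-1}$, and a passage to the limit $\varepsilon\to 0$ --- is genuinely different from the paper's argument, which never forms $\{s,s\}$ but instead uses Yau's theorem to replace $h_\varepsilon$ by a metric whose curvature satisfies the pointwise Monge--Amp\`ere bound $(i\Theta_{L,h}+i\d\dbar\Phi_\varepsilon+\varepsilon\omega)^n\geq \frac{c}{3}\varepsilon^{n-l}\omega^n$, and then applies the Bochner--Kodaira--Nakano inequality together with an eigenvalue estimate ($\lambda^{\varepsilon}_{n-l+1}+\varepsilon\geq c\varepsilon^{\nu}$ with $\nu<1$ outside a set of volume $O(\varepsilon^{\delta})$) to force $u=0$ directly. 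Your first step is fine, and the uniform mass bound you ask for is available after normalizing the potentials by $\sup_X\varphi_\varepsilon=0$, since $s$ takes values in $L^{-1}$ and the weight then enters as $e^{2\varphi_\varepsilon}\leq 1$.

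The obstacle you flag in your second step is, however, not merely technical: it is fatal to the route as you have set it up. For a nef line bundle the normalized metrics $h_\varepsilon$ converge (weakly, after extraction) only to a singular metric whose curvature is a closed positive current $T\in c_1(L)$, not a smooth form; your ``limiting parallel section'' would at best show that the absolutely continuous part of $T$ annihilates $s$, hence has rank $\leq p$ almost everywhere. But for a current this pointwise rank bound does not control $c_1(L)^{p+1}$ in cohomology: the paper's Remark on Example 1.7 of \cite{DPS94} exhibits a nef line bundle $\cO(1)$ with $\nd(\cO(1))=1$ for which the \emph{only} positive current in $c_1(\cO(1))$ is the integration current on a curve, whose absolutely continuous part has rank $0$ everywhere. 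In the paper's notation, your argument proves the vanishing for $p<e(L)$ (Theorem 2), not for $p<\nd(L)$ (Theorem 1), and $e(L)<\nd(L)$ does occur. No normalization of the $h_\varepsilon$ will close this gap along your route; one must convert the hypothesis $p<\nd(L)$ into pointwise curvature information \emph{before} passing to any limit, which is exactly what the Monge--Amp\`ere step in the paper accomplishes.
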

\begin{mythm}
If $L$ is a psef line bundle over a compact K\"ahler manifold $X$, one defines
$e(L)$ to be the largest natural number $k$ such that there exists a positive $(1,1)$-current $T \in c_1(L)$ whose absolutely continuous part has rank $k$ on a strictly positive Lebesgue measure set on $X$. Then $$H^0(X, \Omega^p_X \otimes L^{-1})=0$$
for $p < e(L)$.
\end{mythm}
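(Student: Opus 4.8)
The plan is to argue by contraposition: assuming a nonzero $u\in H^0(X,\Omega^p_X\otimes L^{-1})$ exists, I will show that every positive current $T\in c_1(L)$ has $T_{\mathrm{ac}}$ of rank at most $p$ almost everywhere, which is exactly the statement $e(L)\le p$. Since a nonnegative Hermitian $(1,1)$-form $\theta$ has rank $\le p$ if and only if $\theta^{p+1}=0$, and the $(n,n)$-form $(T_{\mathrm{ac}})^{p+1}\wedge\omega^{n-1-p}$ is nonnegative (here $n=\dim X$ and $\omega$ is a fixed K\"ahler form), the goal is equivalent to $\int_X (T_{\mathrm{ac}})^{p+1}\wedge\omega^{n-1-p}=0$. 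Fix a singular metric $h=e^{-\varphi}$ on $L$ with $\tfrac{\ii}{2\pi}\ddbar\varphi=T\ge 0$ and view $u$ as an injection $\cO(L)\hookrightarrow\Omega^p_X$; locally $u=f\otimes e^\ast$ with $f$ a holomorphic $p$-form, so that $\psi:=\log|u|^2_{h^{-1}}=\log|f|^2_\omega+\varphi$ is a globally defined function, bounded above because $\varphi$ is locally bounded above and $u$ has only an analytic, hence Lebesgue-null, zero set $Z$.

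The case $p=0$ already displays the mechanism. There $\log|f|^2_\omega$ is constant, so $\psi=\varphi+\mathrm{const}$ is plurisubharmonic and bounded above on the compact manifold $X$, hence constant; therefore $T=\tfrac{\ii}{2\pi}\ddbar\varphi=0$ and $e(L)=0$. For general $p$ the only obstruction to this maximum-principle argument is the curvature of $\Omega^p_X$: the Lelong--Poincar\'e/Bochner formula for the holomorphic section $u$ of $(\Omega^p_X\otimes L^{-1},\,\omega\otimes h^{-1})$ gives, on $X\setminus Z$ and after discarding the nonnegative Cauchy--Schwarz term,
\[
\ii\ddbar\psi \;\ge\; 2\pi\,T-R_f,\qquad R_f:=\frac{\langle\ii\Theta_\omega(\Omega^p_X)\,f,f\rangle}{|f|^2_\omega},
\]
where $R_f$ is the curvature of $\Omega^p_X$ tested against $u$.

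To globalise I would regularise $T$ by Demailly's approximation theorem, producing smooth forms $T_\epsilon=\tfrac{\ii}{2\pi}\Theta_{h_\epsilon}(L)$ with $T_\epsilon\ge-\epsilon\omega$, $T_\epsilon\to T$, and, by Boucksom's control of the absolutely continuous parts, $(T_\epsilon)_{\mathrm{ac}}\to T_{\mathrm{ac}}$ almost everywhere. The inequality above then holds for $\psi_\epsilon:=\log|f|^2_\omega+\varphi_\epsilon$ with $T_\epsilon$ in place of $T$ and the same $R_f$. One integrates this inequality against suitable positive test forms built from $T_\epsilon+\epsilon\omega$ and $\omega$, uses that $\int_X\ii\ddbar\psi_\epsilon\wedge(\text{closed})=0$ by Stokes (valid since $\psi_\epsilon$ is bounded above), and passes to the limit $\epsilon\to 0$, the aim being to force the absolutely continuous contribution $\int_X (T_{\mathrm{ac}})^{p+1}\wedge\omega^{n-1-p}$ to vanish.

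The hard part, and the heart of the argument, is twofold. First, smooth closed test forms only detect the cohomological self-intersection $c_1(L)^{p+1}\cdot\{\omega\}^{n-1-p}$, which in general strictly exceeds the genuinely smaller mass $\int_X(T_{\mathrm{ac}})^{p+1}\wedge\omega^{n-1-p}$ that we must bound; so the test forms and the limiting procedure have to be arranged, following Boucksom's comparison of masses, so as to isolate the absolutely continuous part rather than the full class. Second, one must control the indefinite term $R_f$. This cannot be bounded directly, and here the decomposability of $u$ enters: writing $f=\theta_1\wedge\cdots\wedge\theta_p$ in an orthonormal coframe shows $R_f=\sum_{k=1}^p\langle\ii\Theta_\omega(\Omega^1_X)\theta_k,\theta_k\rangle$ is assembled from only $p$ cotangent directions, whereas the surviving $(p+1)$-st power of the positive part of $T$ demands $p+1$ independent positive directions. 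Making precise, via the interior-product identities for $\Lambda^pT^\ast X$ together with the semipositivity of $T_\epsilon$ up to $-\epsilon\omega$, that these $p$ directions absorb the positivity of $T$ in at most a $p$-dimensional subspace, so that the remaining positive direction cannot be compensated and its contribution is forced to zero in the limit, is the step I expect to require the most care. Once it is in place one obtains $\int_X (T_{\mathrm{ac}})^{p+1}\wedge\omega^{n-1-p}=0$, hence $\mathrm{rank}\,T_{\mathrm{ac}}\le p$ almost everywhere, i.e. $e(L)\le p$, completing the contraposition.
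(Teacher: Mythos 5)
Your reduction to showing $\int_X (T_{\mathrm{ac}})^{p+1}\wedge\omega^{n-1-p}=0$ for every positive current $T\in c_1(L)$ is a correct reformulation of $e(L)\le p$, and the $p=0$ case works, but for $p\ge 1$ the argument has a genuine gap at exactly the point you flag, and I do not see how to close it along the route you describe. The inequality $\ii\ddbar\psi_\epsilon\ge 2\pi T_\epsilon-R_f$ carries the curvature $R_f$ of $\Omega^p_X$ tested against $u$; this term is governed by the geometry of $X$, has no sign, and is unrelated to the positivity directions of $T$, so there is no mechanism by which ``the $p$ directions of $f$ absorb the positivity of $T$''. Worse, the device you propose for making this precise --- writing $f=\theta_1\wedge\cdots\wedge\theta_p$ in a coframe --- is unavailable: a holomorphic $p$-form generating a rank-one subsheaf of $\Omega^p_X$ is a single section of $\Lambda^pT^*_X$ and need not be pointwise decomposable for $2\le p\le n-2$ (think of $dz_1\wedge dz_2+dz_3\wedge dz_4$). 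Finally, as you note yourself, pairing $\ii\ddbar\psi_\epsilon$ with smooth closed test forms only produces the cohomological numbers $c_1(L)^{k+1}\cdot\{\omega\}^{n-1-k}$, which can strictly exceed $\int_X(T_{\mathrm{ac}})^{k+1}\wedge\omega^{n-1-k}$, and no procedure is given for isolating the absolutely continuous mass. So two of the three essential steps are announced rather than proved, and one proposed ingredient is false.

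The known proofs succeed precisely because they never see the curvature of $\Omega^p_X$: one treats $u$ not as a section of the vector bundle $\Omega^p_X\otimes L^{-1}$ but as an $L^2$ $(p,0)$-form with values in the line bundle $L^{-1}$, and applies the Bochner--Kodaira--Nakano inequality (on a complete K\"ahler metric on the complement of the singular set, as in Theorem 4 of the paper). For a holomorphic $(p,0)$-form with values in $L^{-1}$ both $\dbar u$ and $\dbar^*u$ vanish, and the commutator $[\ii\Theta_{L^{-1},h_\epsilon^{-1}},\Lambda]$ involves only the eigenvalues $\lambda_1^\epsilon\le\cdots\le\lambda_n^\epsilon$ of the line-bundle curvature, giving $0\ge\int(\lambda_1^\epsilon+\cdots+\lambda_{n-p}^\epsilon)|u|^2\,dV_\omega$; the tangent-bundle curvature never enters. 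Since $\lambda_j^\epsilon\ge-\epsilon$ and, by the a.e.\ convergence of absolutely continuous parts under Demailly regularization (which you correctly invoke), $\lambda_{n-p}^\epsilon$ stays bounded below by a positive constant on a set of positive measure whenever $\mathrm{rank}\,T_{\mathrm{ac}}\ge p+1$ there, this forces $u=0$. Note also that the paper does not reprove this statement: it quotes it from Mourougane and proves the stronger $\nd(L)$ version by the same Bochner--Kodaira--Nakano scheme, reinforced with Yau's theorem to obtain a pointwise Monge--Amp\`ere lower bound. To salvage your write-up, replace the $\ii\ddbar\log|u|^2$ identity by this BKN argument; your surrounding framework (contraposition, regularization, convergence of absolutely continuous parts) then goes through.
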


The absolutely continuous part of a positive current referred to here is
taken in the sense of the Lebesgue decomposition theorem for measures; as
is weel known, such a decomposition is unique.
In this note, we give the following improved version of the Bogomolov vanishing theorem, following ideas of \cite{Mou}.
\begin{mythm}
Let $L$ be a psef line bundle over a compact Kähler manifold $X$. Then we have
$$H^0(X, \Omega^p_X \otimes L^{-1})=0$$
if $p < \nd(L)$.
\end{mythm}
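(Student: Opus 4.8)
The strategy is to argue by contraposition: I will assume that a nonzero section $s \in H^0(X, \Omega^p_X \otimes L^{-1})$ exists and deduce that $\nd(L) \le p$, which is precisely the assertion that $p < \nd(L)$ forces the group to vanish. Writing $n = \dim X$ and fixing a Kähler class $\omega$, recall that in the sense of Boucksom's positive intersection product one has $\nd(L) = \max\{k : \langle c_1(L)^k\rangle \neq 0\}$, so the whole problem reduces to establishing the single identity $\langle c_1(L)^{p+1}\rangle \cdot \{\omega\}^{n-p-1} = 0$. The engine will be the nef case, Theorem 1, transported to a birational model, since it is exactly there that one knows how to convert a section into a numerical-dimension bound.

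The first step is to pull the problem back to modifications. By Boucksom's (approximate) divisorial Zariski decomposition, $\langle c_1(L)^{p+1}\rangle \cdot \{\omega\}^{n-p-1}$ is computed as a limit, over modifications $\mu : \tilde X \to X$ (with $\tilde X$ again compact Kähler), of the intersection numbers $P^{p+1} \cdot (\mu^\star\omega)^{n-p-1}$, where $\mu^\star c_1(L) = P + N$ decomposes into a nef positive part $P$ and an effective class $N = [E]$; assuming $\nd(L) > p$ keeps these numbers bounded away from $0$. The section survives this pullback: the natural map $\mu^\star\Omega^p_X \to \Omega^p_{\tilde X}$ is generically an isomorphism, so $\mu^\star s$ induces a nonzero section $\tilde s \in H^0(\tilde X, \Omega^p_{\tilde X} \otimes \mu^\star L^{-1})$, which in addition vanishes along the negative part $E$.

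The second step is to run the curvature computation underlying Theorem 1 against $P$ on $\tilde X$. I would equip the honest line bundle $\mu^\star L$ with a singular metric whose curvature current is $\theta + [E]$, where $\theta \in P$ is smooth with $\theta \ge -\epsilon\,\mu^\star\omega$ (available because $P$ is nef); the dual metric on $\mu^\star L^{-1}$ then carries negative curvature along $E$ together with the almost-semipositive part $-\theta$. Feeding the holomorphic section $\tilde s$ into the Bochner–Stokes identity that proves the nef vanishing theorem, and using that the fixed $(p,0)$-form $\tilde s$ can produce positivity in at most $p$ independent cotangent directions, one is led to $\int_{\tilde X} \theta^{p+1} \wedge (\mu^\star\omega)^{n-p-1} \le 0$ as $\epsilon \to 0$. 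Since $P$ is nef the same integral is $\ge 0$, whence $P^{p+1} \cdot (\mu^\star\omega)^{n-p-1} = 0$; letting the modifications exhaust the positive product gives $\langle c_1(L)^{p+1}\rangle = 0$, i.e. $\nd(L) \le p$, as desired.

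The main obstacle is exactly the junction where the reduction meets Theorem 1: the positive part $P$ is in general a transcendental, non-integral nef class, so Theorem 1 does not apply verbatim — there is neither a nef line bundle $P$ nor a literal section of $\Omega^p_{\tilde X} \otimes P^{-1}$. The substantive work is therefore to re-prove the nef vanishing with $P$ replaced by its smooth $\epsilon$-semipositive representatives $\theta$, while keeping $\tilde s$ a genuine section of the integral bundle $\mu^\star L^{-1}$, and then to show that the resulting estimate is uniform enough to survive both the limit $\epsilon \to 0$ and the passage along the tower of modifications computing $\langle c_1(L)^{p+1}\rangle$. At the heart of this lies the pointwise claim that the section, realising $\mu^\star L$ as a rank-one subsheaf of $\Omega^p_{\tilde X}$, concentrates the curvature of $\mu^\star L$ in at most $p$ independent directions, so that the $(p+1)$-st power of an almost-semipositive representative must degenerate; controlling this statement in the presence of the $-\epsilon\,\mu^\star\omega$ error terms and of the singularities along $E$ is where I expect the real difficulty to be.
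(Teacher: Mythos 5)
Your overall plan --- pull back to a modification, split $\mu^*c_1(L)$ into a nef part $P$ and an effective part $N$, and run the nef argument against $P$ --- is close in spirit to Boucksom's original proof of this statement, but the proposal stops exactly where the theorem actually gets proved, and the step you defer is not a technicality: it is the entire content. The mechanism of the nef case is not a pointwise statement that ``the section concentrates the curvature of $L$ in at most $p$ directions''; no such statement holds for an arbitrary almost-semipositive representative $\theta\in P$, whose eigenvalues can degenerate on all of $\tilde X$ even when $P^{p+1}\cdot(\mu^*\omega)^{n-p-1}>0$. What makes the argument work is Yau's theorem: one solves a Monge--Amp\`ere equation to replace the given representative by one whose top self-intersection is a prescribed volume form bounded below pointwise by $c\,\varepsilon^{n-l}\omega^n$ (with $l=\nd(L)$), and this pointwise bound on the product of the eigenvalues $\lambda_1\le\cdots\le\lambda_n$, combined with an upper bound on $\lambda_n$ off a set of measure $O(\varepsilon^{\delta})$, forces $\lambda_{n-l+1}+\varepsilon\ge c\,\varepsilon^{\nu}$ with $\nu<1$. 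Only then does the Bochner--Kodaira--Nakano inequality, whose commutator term for a holomorphic $(p,0)$-form with values in $L^{-1}$ is bounded below by $(\lambda_1+\cdots+\lambda_{n-p})|u|^2$ and hence contains $\lambda_{n-l+1}$ precisely because $p<l$, yield anything. This Monge--Amp\`ere input appears nowhere in your outline, and without it there is no way to pass from the cohomological positivity of $P$ to a pointwise curvature estimate.

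There is also a logical problem with how you propose to close the argument: the Bochner inequality is weighted by $|u|^2$, which may vanish on a large open set, so it cannot yield the unweighted conclusion $\int_{\tilde X}\theta^{p+1}\wedge(\mu^*\omega)^{n-p-1}\le 0$. The workable direction is the one in the paper: assume $p<\nd(L)$, extract metrics $h_\varepsilon$ with analytic singularities realizing the numerical dimension, pass to a log resolution only in order to apply Yau's theorem and push the potentials back down, and then use the eigenvalue estimate in the Bochner--Kodaira--Nakano inequality on the complete manifold $X\setminus\tilde Z_\varepsilon$ to force $u=0$ as $\varepsilon\to 0$. Two further unaddressed points in your sketch: the claim that $\mu^*s$ vanishes along the negative part $E$ is unjustified (it concerns a section of $\Omega^p_{\tilde X}\otimes\mu^*L^{-1}$, not a positive current in $\mu^*c_1(L)$, and the paper's proof never needs it), and since the relevant metrics are singular one must verify the $L^2$-integrability of $u$ with respect to the complete metrics and the weight $e^{\Phi_\varepsilon}$ --- a nontrivial verification that occupies a substantial part of the paper's argument.
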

The concept of numerical dimension $\nd(L)$ is recalled below.
We should mention that the above version of the Bogomolov vanishing theorem was first proven in \cite{Bou02}.
The strategy of the both proofs is based on the nef case proven in \cite{Mou}.
The difficulty is the control by Monge-Amp\`ere equation in the pseudo-effective case.
The difficulty is overcome in \cite{Bou02} by a singular version of Monge-Amp\`ere equation, and we give here another proof that only requires solving ``classical'' Monge-Amp\`ere equations. (For example, we avoid the use of Bonavero's version of holomorphic Morse inequalities cf. \cite{Dem85a}, \cite{Dem85b}, \cite{Bon93}.)

\section{Numerical dimension}
We first recall the K\"ahler version of the definition of numerical dimension, as stated e.g.\ in \cite{Dem14}. For $L$ a psef line bundle on a compact K\"ahler manifold $(X, \omega)$, we define
$$\nd(L): = \max\{p \in [0,n] ;\exists c >0,\forall \varepsilon >0, \exists h_{\varepsilon},i\Theta_{L,h_{\varepsilon}} \geq -\varepsilon \omega ,\mathrm{with}\,
\int_{X\setminus Z_{\varepsilon}}( i \Theta_{L,h_{\varepsilon}}+\varepsilon \omega)^p \wedge \omega^{n-p} \geq c\}.$$
Here the metrics $h_{\varepsilon}$ are supposed to have analytic singularities and $Z_{\varepsilon}$ is the singular set of the metric.
Fix a family of metric $h_{\varepsilon}$ as stated in the definition.
For such metrics, for $p > \nd(L)$, by definition,
$$\lim_{\varepsilon \to 0} \int_{X\setminus Z_{\varepsilon}}( i \Theta_{L,h_{\varepsilon}}+\varepsilon \omega)^p \wedge \omega^{n-p}=0.$$
If the line bundle $L$ is nef,  we can take $h_{\varepsilon}$ to be smooth and $Z_{\varepsilon}= \emptyset$,
(cf. the proof of point (i) in \cite{BDPP}, or \cite{Bou02}) and we have for any $p$
$$\lim_{\varepsilon \to 0} \int_{X\setminus Z_{\varepsilon}}( i \Theta_{L,h_{\varepsilon}}+\varepsilon \omega)^p \wedge \omega^{n-p}=\lim_{\varepsilon \to 0} \int_X (c_1(L)+\varepsilon \omega)^p \wedge \omega^{n-p}=\int_{X}  c_1(L)^p \wedge \omega^{n-p}.$$
The integral condition in the definition of the numerical dimension in the nef case means that  $p=\nd(L)$ is the largest integer such that
$$\int_{X}  c_1(L)^p \wedge \omega^{n-p} \neq 0.$$
Since for each $p$, $c_1(L)^p$ can be represented by a positive closed $(p,p)$-current, the triviality of the mass is equivalent to the triviality of the current.
In other words,
$$\nd(L)=\max\{p; c_1(L)^p  \neq 0\},$$
which corresponds to the definition of the numerical dimension for a nef line bundle.

In fact, if ones denotes $\alpha:= c_1(L)$, the numerical dimension of the psef line bundle $L$ is equal to the numerical dimension of the class of $\alpha$ defined in \cite{BEGZ}. To check the quality, one needs the definition of the moving intersection product for arbitrary psef $(1,1)$-classes $\alpha$ and any number of factors $1 \leq p \leq n$. We start by recalling the following definition.
\begin{mydef} {\rm (See \cite{DPS01})}.
Let $\varphi_1,\varphi_2$ be two quasi-psh functions on $X$ (i.e. $i \d \dbar \varphi_i \geq -C \omega$ in the sense of currents for some $C \geq 0$). Then, $\varphi_1$ is less singular than $\varphi_2$ (and write $\varphi_1 \preceq \varphi_2$) if we have $\varphi_2 \leq \varphi_1+C_1$ for some constant $C_1$.
Let $\alpha$ be a psef class in $H^{1,1}_{BC}(X,\R)$ and $\gamma$ be a smooth real $(1,1)$-form.
Let $T_1,T_2,\theta \in \alpha$
with $\theta$ smooth and such that $T_i=\theta+i\d \dbar \varphi_i (i=1,2)$.
$\varphi_i$ is well defined up to constant since $X$ is compact.
We say $T_1 \preceq T_2$ if and only if $\varphi_1 \preceq \varphi_2$.

The minimal element $T_{min,\gamma}$ with the pre-order relation $\preceq$ exists by taking the upper semi-continuous envelope of all $\varphi_{i}$ such that $\theta +\gamma+i \d \dbar \varphi_i \geq 0$ and $\sup_X \varphi_i=0$.
\end{mydef}
The positive product defined in \cite{BEGZ} is the real $(p,p)$ cohomology class $\langle \alpha^p \rangle$ of the limit 
$$\langle \alpha^p \rangle:=\lim_{\delta \to 0} \{\langle T_{\min, \delta \omega}^p\rangle\}$$
where $T_{\min, \delta \omega}$ is the positive current with minimal singularity in the class $\alpha + \delta \{\omega\}$ and $\langle T_{\min, \delta \omega}^p \rangle$ is the non-pluripolar product.
The numerical dimension of $\alpha$ is defined by
$$\nd(\alpha):= \max \{p| \langle \alpha^p \rangle \neq 0\}$$
which is also equal to $\max \{p|\int_X \langle \alpha^p \rangle  \wedge \omega^{n-p} >0\}$.
The equivalence of the two numerical dimensions given here is an adapted version of the arguments of \cite{Tos}.
Although this is perhaps well-known to experts, we feel useful to give complete proofs here. We need the definition of the non-K\"ahler locus given in \cite{Bou04}.
\begin{mydef}
Let $\alpha$ be a big class in $H^{1,1}(X,\R)$. The non-K\"ahler locus is defined to be
$$E_{nK}(\alpha):= \bigcap_{T \in \alpha} E_+(T)$$
where $T$ ranges all K\"ahler currents in $\alpha$ and $E_+(T):= \bigcup_{c >0} E_c(T)$. 
\end{mydef}
We will also need the following lemma of \cite{Bou04}, which implies in particular that the non-K\"ahler locus is in fact an analytic set. 
\begin{mylem}
Let $\alpha$ be a big class. There exists a K\"ahler current $\tilde{T}$ with analytic singularities such that $E_{nK}(\alpha)= E_+ (\tilde{T})$.
\end{mylem}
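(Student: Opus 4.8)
The plan is to realize $E_{nK}(\alpha)$ as the intersection of the loci $E_+(T)$ over Kähler currents $T$ with analytic singularities, and then to extract a single such current by a Noetherian argument. First I would reduce the intersection defining $E_{nK}(\alpha)$ to currents with analytic singularities. Given an arbitrary Kähler current $T=\theta+i\ddbar\varphi\geq \varepsilon\omega$ in $\alpha$, Demailly's regularization theorem produces currents $T_m=\theta+i\ddbar\varphi_m$ with analytic singularities, still Kähler for $m$ large since $T_m\geq(\varepsilon-\delta_m)\omega$ with $\delta_m\to 0$, whose Lelong numbers satisfy $\nu(T_m,x)\leq\nu(T,x)$ (with the reverse inequality up to $n/m$). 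From $\nu(T_m,\cdot)\leq\nu(T,\cdot)$ one gets $E_+(T_m)\subseteq E_+(T)$, so intersecting over $m$ and then over all $T$ yields $\bigcap_{T\ \mathrm{anal.}}E_+(T)\subseteq E_{nK}(\alpha)$; the reverse inclusion is immediate because the analytic-singularity currents form a subfamily. Hence it suffices to analyse the family $\mathcal{F}=\{E_+(T)\}$ where $T$ ranges over Kähler currents with analytic singularities in $\alpha$. Each member of $\mathcal{F}$ is an analytic subset: for analytic singularities the sublevel sets $E_c(T)$ are analytic and take only finitely many distinct values, so $E_+(T)=\bigcup_{c>0}E_c(T)$ is analytic.

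Next I would show that $\mathcal{F}$ is stable under pairwise intersection. Given $T_i=\theta+i\ddbar\varphi_i$ ($i=1,2$) with analytic singularities, set $\varphi_3=\max(\varphi_1,\varphi_2)$. Since both $\varphi_i$ are $(\theta-\varepsilon\omega)$-psh, so is $\varphi_3$, whence $T_3=\theta+i\ddbar\varphi_3\geq\varepsilon\omega$ is again a Kähler current; and from $\nu(\max(\varphi_1,\varphi_2),x)\leq\min(\nu(\varphi_1,x),\nu(\varphi_2,x))$ one obtains $E_+(T_3)\subseteq E_+(T_1)\cap E_+(T_2)$. Regularizing $T_3$ once more gives a Kähler current $T_3'$ with analytic singularities and $E_+(T_3')\subseteq E_+(T_3)\subseteq E_+(T_1)\cap E_+(T_2)$, so $E_+(T_3')\in\mathcal{F}$ sits inside the intersection.

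Finally, the descending chain condition for analytic subsets of the compact manifold $X$ guarantees that the family $\mathcal{F}$ admits a minimal element $E_+(\tilde T)$. The stability property forces $E_+(\tilde T)\subseteq E_+(T)$ for every $T$ in the family: for each such $T$ there is $S\in\mathcal{F}$ with $E_+(S)\subseteq E_+(\tilde T)\cap E_+(T)\subseteq E_+(\tilde T)$, and minimality gives $E_+(S)=E_+(\tilde T)$, hence $E_+(\tilde T)\subseteq E_+(T)$. Intersecting over the whole family, $E_+(\tilde T)\subseteq\bigcap_{T\ \mathrm{anal.}}E_+(T)=E_{nK}(\alpha)$, while $E_{nK}(\alpha)\subseteq E_+(\tilde T)$ because $\tilde T$ is itself an admissible current. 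This yields the desired equality $E_{nK}(\alpha)=E_+(\tilde T)$.

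I expect the main obstacle to lie in the regularization steps rather than in the formal Noetherian extraction at the end: one must invoke Demailly's approximation with the precise two-sided control of Lelong numbers, and carefully check that regularizing $\max(\varphi_1,\varphi_2)$ (whose singularities need not themselves be analytic) keeps the current strictly positive while only shrinking $E_+$. Once the monotonicity $E_+(T_m)\subseteq E_+(T)$ and the Kähler property after regularization are secured, the remaining argument is purely combinatorial.
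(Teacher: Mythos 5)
Your argument is correct and follows essentially the same route as the paper: reduce to K\"ahler currents with analytic singularities via Demailly regularization, combine currents by taking the maximum of their potentials (which only shrinks $E_+$ while preserving the K\"ahler current property after a further regularization), and invoke the strong Noetherian property of analytic sets. The only cosmetic difference is that you phrase the Noetherian step as extracting a minimal element of a downward-directed family via the descending chain condition, whereas the paper first extracts a finite subfamily realizing the intersection and then regularizes the maximum of those finitely many potentials; the two are interchangeable.
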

\begin{proof}
By means of a regularization, we have equivalently that
$E_{nK} (\alpha)=\bigcap_{T \in \alpha}E_+(T)$
where $T$ ranges all K\"ahler currents with analytic singularities.
Since $T$ has analytic singularities, $E_+(T)$ is a proper analytic set.
By the strong Noetherian property, there exist finitely many K\"ahler
currents $T_i (i \in I)$ with analytic singularities such that
$E_{nK}(T)= \bigcap_{i \in I} E_+(T_i)$.
Take a regularization $\tilde{T}$ of $\min_{i \in I} T_i$ (associated with
the max of potentials). Then we have
$$\nu(\tilde{T},x) \leq \min_{i \in I} \nu(T_i,x)$$
for any $x \in X$. In particular, this implies that
$$E_+(\tilde{T}) \subset \bigcap_{i \in I} E_+(T_i).$$
Since $\tilde{T}$ itself is a K\"ahler current with analytic singularities, we get in fact an equality in the statement.
\end{proof}
We will need the following result stated in \cite[Prop.\ 1.16]{BEGZ}.
\begin{myprop}
For $j=1,\cdots,p$, let $T_j$ and $T'_j$ be two closed positive $(1,1)$-currents with small unbounded locus (i.e. there exists a (locally) complete pluripolar closed subset $A$ of $X$ outside which the potential is locally bounded) in the same cohomology class, and assume also that $T_j$ is less singular than $T'_j$. Then the cohomology classes of their non-pluripolar products satisfy
$\{\langle T_1 \wedge \cdots \wedge T_p\rangle\} \geq \{\langle T'_1 \wedge \cdots \wedge T'_p \rangle\}$
in $H^{p,p}(X,\R)$, where $\geq $ means that the difference is pseudo-effective, i.e. representable by a closed positive $(p,p)$-current.
\end{myprop}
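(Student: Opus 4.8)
The plan is to reduce the statement, by the multilinearity of the non-pluripolar product, to the case where the two tuples differ in a single factor. Writing the telescoping sum
\[ \langle T_1 \wedge \cdots \wedge T_p\rangle - \langle T'_1 \wedge \cdots \wedge T'_p\rangle = \sum_{j=1}^p\big(\langle T_1\wedge\cdots\wedge T_{j-1}\wedge T_j\wedge T'_{j+1}\wedge\cdots\wedge T'_p\rangle - \langle T_1\wedge\cdots\wedge T_{j-1}\wedge T'_j\wedge T'_{j+1}\wedge\cdots\wedge T'_p\rangle\big), \]
each summand freezes the slots $1,\dots,j-1$ at $T_i$ and the slots $j+1,\dots,p$ at $T'_i$, changing only the $j$-th slot from $T'_j$ to the less singular $T_j$. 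Since every current in sight has small unbounded locus, all these mixed products are again well-defined closed positive currents with small unbounded locus, so it suffices to prove $\{\langle T_1 \wedge S\rangle\} \geq \{\langle T'_1 \wedge S\rangle\}$, where $S$ is a fixed closed positive $(p-1,p-1)$-current with small unbounded locus and $T_1 \succeq T'_1$ lie in a common class.

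Next I would fix a smooth representative $\theta_1 \in \{T_1\}=\{T'_1\}$ and write $T_1 = \theta_1 + i\ddbar\varphi$ and $T'_1 = \theta_1 + i\ddbar\psi$, normalised so that $\psi \leq \varphi \leq 0$; the inequality $\psi \preceq \varphi$ up to a constant is precisely the meaning of $T_1 \succeq T'_1$. Consider the canonical truncations $\varphi_k := \max(\varphi,-k)$ and $\psi_k := \max(\psi,-k)$, which are bounded and decrease to $\varphi,\psi$. For each $k$ the currents $R_k := (\theta_1 + i\ddbar\varphi_k)\wedge S$ and $R'_k := (\theta_1 + i\ddbar\psi_k)\wedge S$ are closed and positive, and, their potentials being bounded, they lie in one and the same cohomology class $c := \{\theta_1\}\cdot\{S\}$ for every $k$. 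By the definition of the non-pluripolar product as an increasing limit over the plurifine-open sublevel sets, together with the plurifine locality of the Bedford--Taylor product, $\langle T_1\wedge S\rangle$ and $\langle T'_1\wedge S\rangle$ are the increasing limits of the restrictions of $R_k$ and $R'_k$ to $\{\varphi>-k\}$ and $\{\psi>-k\}$ respectively. Hence each of the two classes can be expressed as $c$ minus the limit of the classes of the complementary \emph{defect} parts, which are themselves positive.

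The key algebraic observation to exploit is that, since $R_k$ and $R'_k$ represent the same class $c$, their difference is $i\ddbar$-exact: $R_k - R'_k = i\ddbar\Gamma_k$ with $\Gamma_k := (\varphi_k - \psi_k)\,S \geq 0$ a positive $(p-1,p-1)$-current, because $\varphi_k \geq \psi_k$ and $S \geq 0$. The plan is to use this positive potential, together with the inclusion $\{\psi>-k\}\subseteq\{\varphi>-k\}$ coming from $\psi \leq \varphi$, to compare the defect parts at each level and to produce, in the weak limit, a closed positive $(p,p)$-current representing $\{\langle T_1\wedge S\rangle\}-\{\langle T'_1\wedge S\rangle\}$; closedness of the pseudoeffective cone then upgrades the subsequential limit to the desired pseudoeffective class.

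The main obstacle is exactly this last step: promoting the evident inequality of \emph{total masses} of the defect parts into representability by an honest closed positive current, rather than a mere numerical comparison of integrals. I expect to resolve it by an integration-by-parts (Stokes) argument carried out on the complement of the pluripolar unbounded locus — legitimate precisely because the small unbounded locus hypothesis guarantees locally bounded potentials off a closed complete pluripolar set and forbids any escape of mass across it — combined with the plurifine locality that identifies the two products with genuine Bedford--Taylor products on each bounded sublevel set. The classical Bedford--Taylor comparison principle for locally bounded potentials furnishes the base case, and the truncation limit transports it to currents with small unbounded locus, yielding the asserted inequality $\{\langle T_1 \wedge \cdots \wedge T_p\rangle\} \geq \{\langle T'_1 \wedge \cdots \wedge T'_p\rangle\}$.
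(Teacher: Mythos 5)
The first thing to say is that the paper contains no proof of this proposition: it is imported verbatim from \cite[Prop.~1.16]{BEGZ} and used as a black box, so your attempt can only be judged on its own merits and against the argument in that reference. Your reduction to a single factor by telescoping, the truncation set-up, and the observation that $R_k-R'_k=i\ddbar\bigl((\varphi_k-\psi_k)S\bigr)$ with a positive potential are all correct, but the decisive step is missing --- and you say so yourself. The difficulty is not a technicality that ``integration by parts plus the Bedford--Taylor comparison principle'' will absorb. With the absolute truncations $\varphi_k=\max(\varphi,-k)$, $\psi_k=\max(\psi,-k)$, the truncated currents $R_k$ and $R'_k$ lie in the \emph{same} class $c$ for every $k$, so all the content of the statement sits in the ``defect parts'' $\mathbf{1}_{\{\varphi\le-k\}}R_k$ and $\mathbf{1}_{\{\psi\le-k\}}R'_k$. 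These are positive but not closed, hence carry no individual cohomology classes (only their limits do, after extracting weakly convergent subsequences), and there is no pointwise or measure-theoretic order between $\mathbf{1}_{\{\varphi>-k\}}R_k$ and $\mathbf{1}_{\{\psi>-k\}}R'_k$: they are Bedford--Taylor products of genuinely different potentials restricted to different plurifine open sets. In particular the positive potential $\Gamma_k$ buys you nothing --- it only re-proves $\{R_k\}=\{R'_k\}$, which is the statement with zero content at each finite level $k$.

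The idea that actually closes the argument in \cite{BEGZ} is a \emph{relative} truncation interpolating between the two potentials. Normalise $\psi\le\varphi$ and set $\sigma_k:=\max(\psi,\varphi-k)$. Unlike $\max(\varphi,-k)$, this is honestly $\theta_1$-plurisubharmonic (a max of two $\theta_1$-psh functions), it has the same singularity type as $\varphi$ since $\varphi-k\le\sigma_k\le\varphi$, and it coincides with $\psi$ on the plurifine open set $O_k=\{\psi>\varphi-k\}$. Hence $P_k:=\langle(\theta_1+i\ddbar\sigma_k)\wedge S\rangle$ satisfies $P_k\ge\mathbf{1}_{O_k}\langle T'_1\wedge S\rangle$, and the right-hand side increases to $\langle T'_1\wedge S\rangle$ because the non-pluripolar product puts no mass on $\{\psi=-\infty\}$; on the other hand $\{P_k\}=\{\langle T_1\wedge S\rangle\}$ for every $k$, and it is precisely here that the small-unbounded-locus hypothesis is used, via the integration-by-parts theorem of \cite{BEGZ} applied to the difference $\sigma_k-\varphi$, which is bounded off the polar set. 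A weak limit of the $P_k$ (possible since their masses are controlled by the fixed class) is then a closed positive current $P\ge\langle T'_1\wedge S\rangle$ in the class $\{\langle T_1\wedge S\rangle\}$, and $P-\langle T'_1\wedge S\rangle$ is the desired positive representative of the difference. This interpolation is the one idea your sketch lacks, and without it the plan does not go through.
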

Now we are prepared to prove that
\begin{myprop}
For $L$ a psef line bundle, we have
$$\nd(c_1(L))=\nd(L).$$
\end{myprop}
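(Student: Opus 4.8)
The plan is to prove the two inequalities $\nd(L)\le\nd(c_1(L))$ and $\nd(c_1(L))\le\nd(L)$ separately, after recasting the K\"ahler definition of $\nd(L)$ in the language of non-pluripolar products. Write $\alpha:=c_1(L)$. A singular metric $h_\varepsilon$ on $L$ with analytic singularities and $i\Theta_{L,h_\varepsilon}\ge-\varepsilon\omega$ corresponds exactly to a closed positive $(1,1)$-current $T_\varepsilon:=i\Theta_{L,h_\varepsilon}+\varepsilon\omega$ with analytic singularities in the class $\alpha+\varepsilon\{\omega\}$, and conversely every such current arises this way. Since $T_\varepsilon$ has analytic singularities, its singular set $Z_\varepsilon$ is a proper analytic subset and the integral in the definition is exactly the mass of a non-pluripolar product,
$$\int_{X\setminus Z_\varepsilon}(i\Theta_{L,h_\varepsilon}+\varepsilon\omega)^p\wedge\omega^{n-p}=\int_X\langle T_\varepsilon^p\rangle\wedge\omega^{n-p}.$$
Hence $\nd(L)$ is the largest $p$ for which there is a constant $c>0$ such that for every $\varepsilon>0$ there is a positive current $T_\varepsilon\in\alpha+\varepsilon\{\omega\}$ with analytic singularities and $\int_X\langle T_\varepsilon^p\rangle\wedge\omega^{n-p}\ge c$. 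Note that $\alpha+\varepsilon\{\omega\}$ is big for $\varepsilon>0$, so by the Lemma above its non-K\"ahler locus is a proper analytic set and the minimal singularity current $T_{\min,\varepsilon\omega}$ has small unbounded locus.

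For $\nd(L)\le\nd(\alpha)$, fix $p\le\nd(L)$ and a family $T_\varepsilon$ as above. Both $T_\varepsilon$ and $T_{\min,\varepsilon\omega}$ lie in $\alpha+\varepsilon\{\omega\}$ and have small unbounded locus, and $T_{\min,\varepsilon\omega}$ is less singular than $T_\varepsilon$. The comparison of non-pluripolar products recalled above, applied with all $p$ factors equal and wedged with the psef class $\omega^{n-p}$, gives
$$\int_X\langle T_{\min,\varepsilon\omega}^p\rangle\wedge\omega^{n-p}\ge\int_X\langle T_\varepsilon^p\rangle\wedge\omega^{n-p}\ge c.$$
Passing to the limit, $\int_X\langle\alpha^p\rangle\wedge\omega^{n-p}=\lim_{\varepsilon\to0}\int_X\langle T_{\min,\varepsilon\omega}^p\rangle\wedge\omega^{n-p}\ge c>0$, so $\langle\alpha^p\rangle\ne0$ and $p\le\nd(\alpha)$.

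For the reverse inequality $\nd(\alpha)\le\nd(L)$, fix $p\le\nd(\alpha)$, so $c_0:=\int_X\langle\alpha^p\rangle\wedge\omega^{n-p}>0$; by monotonicity of the positive product, $\int_X\langle T_{\min,\delta\omega}^p\rangle\wedge\omega^{n-p}\ge c_0$ for every $\delta>0$. I would fix $\delta$ and apply Demailly's regularization theorem to $T_{\min,\delta\omega}$ inside the big class $\alpha+\delta\{\omega\}$ to obtain K\"ahler currents $S_{\delta,k}$ with analytic singularities satisfying $S_{\delta,k}\ge-\eta_k\omega$ with $\eta_k\to0$; then $S_{\delta,k}+\eta_k\omega$ is a positive analytic-singularity current in $\alpha+(\delta+\eta_k)\{\omega\}$. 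Setting $\varepsilon:=\delta+\eta_k$, a diagonal choice $k=k(\delta)$ with $\delta\to0$ would realize $p$ for the K\"ahler definition, provided the masses stay bounded below.

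The main obstacle is precisely this mass control. The regularizations $S_{\delta,k}$ are in general \emph{more} singular than $T_{\min,\delta\omega}$, so the comparison above yields only the upper bound $\int_X\langle S_{\delta,k}^p\rangle\wedge\omega^{n-p}\le\int_X\langle T_{\min,\delta\omega}^p\rangle\wedge\omega^{n-p}$; what is needed is the matching lower bound in the limit, i.e.\ that no mass of the mixed non-pluripolar product is lost under regularization,
$$\lim_{k\to\infty}\int_X\langle S_{\delta,k}^p\rangle\wedge\omega^{n-p}=\int_X\langle T_{\min,\delta\omega}^p\rangle\wedge\omega^{n-p}.$$
This continuity along approximating sequences in a big class is where the argument of \cite{Tos} is adapted: for $p=n$ it is Boucksom's description of the volume as a supremum over analytic-singularity K\"ahler currents, and the mixed case $p<n$ is treated in the same spirit. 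Granting it, choosing $k(\delta)$ so that the mass exceeds $c_0/2$ gives the uniform bound $c:=c_0/2$ and finishes the proof.
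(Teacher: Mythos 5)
Your first inequality $\nd(L)\le\nd(c_1(L))$ is complete and is essentially the paper's own argument: compare the given analytic-singularity currents with $T_{\min,\varepsilon\omega}$ via the monotonicity of non-pluripolar products and let $\varepsilon\to0$. The reverse inequality, however, contains a genuine gap that you yourself flag: the whole difficulty of the proposition is the lower bound
$$\liminf_{k\to\infty}\int_X\langle S_{\delta,k}^p\rangle\wedge\omega^{n-p}\ \ge\ \int_X\langle T_{\min,\delta\omega}^p\rangle\wedge\omega^{n-p},$$
and you assume it (``granting it'') rather than prove it. As you observe, monotonicity of the non-pluripolar product only gives the opposite inequality, so nothing in your argument rules out the regularized masses collapsing to zero along the diagonal subsequence; an appeal to ``the spirit of Tosatti's argument'' is not a proof.

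The paper closes exactly this gap with three elementary ingredients. First, the Demailly (Bergman kernel) regularization is chosen so that the potentials of $T_{\varepsilon,\delta}$ \emph{decrease} to the potential of $T_{\min,\varepsilon\omega}$ as $\delta\to0$. Second, by the lemma on the non-K\"ahler locus, $A_\varepsilon=E_{nK}(\alpha+\varepsilon\{\omega\})=E_+(T_\varepsilon)$ for a K\"ahler current $T_\varepsilon$ with analytic singularities; since $T_{\min,\varepsilon\omega}$ is less singular than $T_\varepsilon$, its potential is locally bounded on $X\setminus A_\varepsilon$, and hence so are the potentials of all the $T_{\varepsilon,\delta}$. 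Third, Bedford--Taylor weak continuity of Monge--Amp\`ere operators along decreasing sequences of locally bounded psh functions gives $(T_{\varepsilon,\delta}+\varepsilon\omega)^l\to(T_{\min,\varepsilon\omega}+\varepsilon\omega)^l$ on $X\setminus A_\varepsilon$ (the extra $\varepsilon\omega$ is added precisely so that $T_{\varepsilon,\delta}+\varepsilon\omega\ge0$ once $\delta\le\varepsilon$), and Fatou's lemma then yields the liminf lower bound for the masses over $X\setminus A_\varepsilon$, which agree with the non-pluripolar masses since these charge no analytic set. A diagonal choice $\delta(\varepsilon)\le\varepsilon$ then produces the required family of metrics with analytic singularities. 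Note also that only this one-sided liminf inequality is needed, not the exact limit you state, and Fatou delivers exactly that; supplying this chain of reasoning would complete your proof along the same lines as the paper.
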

\begin{proof}
Let $h_{\varepsilon}$ be a family of metric with analytic singularities as stated in the definition of $\nd(L)$.
Denote $A_{\varepsilon}:=E_{nK}(\alpha+\varepsilon \{\omega\})$.
Since $T_{\min, \varepsilon \omega} \preceq i \Theta_{L, h_{\varepsilon}}+\varepsilon \omega $, we have by proposition 1 that for any $1 \leq p \leq n$
$$\int_{X \setminus Z_{\varepsilon}} (i \Theta_{L, h_{\varepsilon}}+\varepsilon \omega)^p \wedge \omega^{n-p} = \int_{X\setminus (Z_{\varepsilon} \cup A_{\varepsilon})} \langle (i \Theta_{L, h_{\varepsilon}}+\varepsilon \omega)^p \rangle \wedge \omega^{n-p} \leq \int_{X\setminus (Z_{\varepsilon} \cup A_{\varepsilon})} \langle T_{\min, \varepsilon \omega}^p \rangle \wedge \omega^{n-p}.$$
The right hand term is the same as $\int_{X} \langle T_{\min, \varepsilon \omega}^p \rangle \wedge \omega^{n-p}$, 
since the non-pluripolar product has no mass on any analytic set.
It has limit equal to $\int_X \langle c_1(L)^p \rangle \wedge \omega^{n-p}$.
In particular, this implies that $\nd(c_1(L))\geq \nd(L)$.
We remark that $A_{\varepsilon}$ is an analytic set hence is a small unbounded locus.

For the other direction, we construct a family of metrics with analytic singularities with control of the Monge-Amp\`ere mass from below. Denote $p:= \nd(c_1(L))$.
Since $\langle c_1(L)^p \rangle \neq 0$, for $\varepsilon$ small enough, $\langle T_{\min, \varepsilon \omega}^p \rangle \neq 0$ and
$$\int_X \langle T_{\min, \varepsilon \omega}^p \rangle \wedge \omega^{n-p} \geq c$$
for some uniform constant $c >0$ whenever $\varepsilon$ is small enough.
Let $T_{\varepsilon,\delta}$ be a sequence of regularisation of $T_{\min, \varepsilon \omega}$ with analytic singularities such that 
$$T_{\varepsilon, \delta} \geq -\delta \omega$$
and the potentials of $T_{\varepsilon, \delta}$ decrease to the potential of $T_{\min, \varepsilon \omega}$.
Then $T_{\min, \varepsilon \omega}+\varepsilon \omega$ and $T_{\varepsilon , \delta}+\varepsilon \omega$ are closed positive currents in the cohomology class $\alpha + 2\varepsilon \{\omega\}$ if $\delta \leq \varepsilon$.
By lemma 1, $A_{\varepsilon}=E_+ (T_{\varepsilon})$ for some K\"ahler current with analytic singularities.
Thus $T_{\min, \varepsilon \omega} \preceq T_{\varepsilon}$, whose potential is locally bounded outside $A_{\varepsilon}$, as the potential of $T_{\varepsilon}$ is.
So the potentials of $T_{\varepsilon, \delta}$ are also locally bounded outside $A_{\varepsilon}$.
By weak continuity of the Bedford-Taylor Monge-Amp\`ere operators with respect to decreasing sequences of functions, we have on $X \setminus A_{\varepsilon}$ that, 
$$(T_{\varepsilon , \delta}+\varepsilon \omega)^l \to (T_{\min, \varepsilon \omega}+\varepsilon \omega)^l$$
for any $l$. 
By the Fatou lemma, we have
$$\int_{X \setminus A_{\varepsilon}} (T_{\min, \varepsilon \omega}+\varepsilon \omega)^p \wedge \omega^{n-p} \leq \liminf_{\delta \to 0}\int_{X \setminus A_{\varepsilon}} (T_{ \varepsilon , \delta}+\varepsilon \omega)^p \wedge \omega^{n-p}.$$
Take any family $\delta(\varepsilon)$ such that $\delta(\varepsilon) \leq \varepsilon$ and $\lim_{\varepsilon \to 0} \delta(\varepsilon) =0$.
Let $h_{\varepsilon}$ be the metric on $L$ with analytic singularities such that $i \Theta_{L, h_{\varepsilon}}=T_{ \varepsilon , \delta(\varepsilon)}-\varepsilon \omega$.
The metric $h_{\varepsilon}$ is uniquely defined up to a multiple.
To normalise it, we can assume for example, that the maximum of the potentials on $X$ equals to 0.
Hence we have
$$i \Theta_{L,h_{\varepsilon}}\geq -(\varepsilon+\delta(\varepsilon)) \omega \geq -2 \varepsilon \omega.$$
Then the sequence of metric satisfies the condition demanded in the definition of $\nd(L)$.
\end{proof}
\begin{myrem}{\rm
$ T_{\min, \varepsilon' \omega} \preceq T_{\min, \varepsilon \omega}+(\varepsilon'-\varepsilon)\omega $ for any $\varepsilon \leq \varepsilon'$.
Denote $T_{\min, \varepsilon \omega}= \theta+\varepsilon \omega +i \d \dbar \varphi_{\min, \varepsilon \omega}$.
We can arrange that 
$$\varphi_{\min,0} \leq \varphi_{\min,\varepsilon \omega} \leq \varphi_{\min,\varepsilon' \omega}.$$
The Bergman kernel regularisation perserves the ordering of potentials (cf. \cite{Dem14}), 
so we have
$$\varphi_{0, \delta} \leq \varphi_{\varepsilon, \delta} \leq \varphi_{\varepsilon',\delta}.$$ for any $\delta >0$.
If $\delta(\varepsilon)$ is increasing with respect to $\varepsilon$, by the proof of the proposition, 
we can choose the metric $h_{\varepsilon}$ to be decreasing with respect to $\varepsilon$.
The limit of $\varphi_{\varepsilon, \delta(\varepsilon)}$ as $\varepsilon \to 0$ is equal to $\varphi_{\min,0}$ corresponding to the metric with minimal singularities on $L$.}
\end{myrem}
\begin{myrem}
{\rm By similarity with the definition of \cite{Dem14} for the numerical dimension of a psef line bundle, one can define the numerical dimension of a psef cohomology class.
The above proof shows in fact that the two definitions of the numerical dimension of a psef cohomology class coincide. }
\end{myrem}
\section{Bogomolov vanishing theorem}
In this section, we prove a numerical dimension version of the Bogomolov vanishing theorem.
From now on, we denote $l := \nd(L)$.
Then we have
$$\lambda_{\varepsilon}:= \frac{\int_{X\setminus Z_{\varepsilon}}( i \Theta_{L,h_{\varepsilon}}+\varepsilon \omega)^n}{\int_X \omega^n} \geq c \varepsilon^{n-l}.$$

The first step of the proof consists in the use of Yau's theorem \cite{Yau78}, so as to show that one can turn the above integral inequality into a pointwise lower bound, more precisely, the inequality $(*)$ given below.
Up to a re-parametrisation of $\varepsilon$, we can assume that
$$i \Theta_{L,h_{\varepsilon}}+\varepsilon \omega \geq \frac{\varepsilon}{2}\omega.$$
Let $\nu_{\varepsilon}: X_{\varepsilon} \to X$ be a log resolution of the analytic singularities of $h_\varepsilon$.
We then have
$$\nu_{\varepsilon}^* (i \Theta_{L,h_{\varepsilon}}+\varepsilon \omega)=  [D_{\varepsilon}]+\beta_{\varepsilon}$$
where $\beta_{\varepsilon} \geq \frac{\varepsilon}{2} \nu_{\varepsilon}^*\omega \geq 0$ is a smooth positive closed $(1,1)$-form on $X_{\varepsilon}$.
It is strictly positive on the complement $X_{\varepsilon} \setminus E$ of the exceptional divisor $E$ (we denote its irreducible components as $E_l$).
$[D_{\varepsilon}]$ is the closed positive current associated to a $\R$-divisor.
By the theorem of Hironaka \cite{Hir64} we can assume that the exceptional divisor is simple normal crossing divisors and the morphism is obtained as a composition of a sequence of blow up with smooth centres. 
In this situation, there exist arbitrary small numbers $\eta_l >0$ such that the cohomological class of $\beta_{\varepsilon} -\sum \eta_l [E_l]$ is a K\"ahler class (which means that there exists a K\"ahler form in this class).

Hence we can find a quasi psh function $\hat{\theta_{\varepsilon}}$ on $X_{\varepsilon}$ such that
$$\hat{\beta_{\varepsilon}}:=\beta_{\varepsilon} -\sum \eta_l [E_l]+i \d\dbar \hat{\theta_{\varepsilon}} $$
is a K\"ahler metric on $X_{\varepsilon}$.
By taking $\eta_l$ small enough, we can assume that
$$\int_{X_{\varepsilon}} (\hat{\beta_{\varepsilon}})^n \geq \frac{1}{2}\int_X \beta_{\varepsilon}^n.$$
 
The assumption on the numerical dimension implies that there exists $c>0$ such that with $Z_{\varepsilon}:=\nu_{\varepsilon}(E) \subset X $, we have
$$\int_{X_{\varepsilon}} \beta_{\varepsilon}^n =\int_{X \setminus Z_{\varepsilon}} (i \Theta_{L,h_{\varepsilon}}+\varepsilon \omega)^n$$
$$\geq {n\choose l} (\frac{\varepsilon}{2})^{n-l} \int_{X \setminus Z_{\varepsilon}}  (i \Theta_{L,h_{\varepsilon}}+\frac{\varepsilon}{2} \omega)^l \wedge \omega^{n-l} \geq c \varepsilon^{n-l} \int_X \omega^n.$$
Hence we have
$$\int_{X_{\varepsilon}}  (\hat{\beta_{\varepsilon}})^n \geq \frac{c}{2} \varepsilon^{n-l} \int_X \omega^n.$$ 
By Yau's theorem \cite{Yau78}, there exists a quasi-psh potential $\hat{\tau_{\varepsilon}}$ on $X_{\varepsilon}$ 
such that $\hat{\beta}_{\varepsilon}+i \d \dbar \hat{\tau}_{\varepsilon}$
is a K\"ahler metric on $X_{\varepsilon}$ with any prescribed volume form $\hat{f}$ such that $\int_{X_{\varepsilon}} \hat{f}= \int_{X_{\varepsilon}}(\hat{\beta_{\varepsilon}})^n $.
By the integral condition, we can choose a smooth volume form on $X_{\varepsilon}$ such that 
$$\hat{f} > \frac{c}{3} \varepsilon^{n-l} \nu_{\varepsilon}^* \omega^n\leqno(*)$$ 
everywhere on $X_{\varepsilon}$.
Fix $h$ a smooth metric on $L$ and let $\varphi_{\varepsilon}$ be the weight function of $h_{\varepsilon}$ (i.e. $h_{\varepsilon}=h e^{-2 \varphi_{\varepsilon}}$).
We impose the additional normalization condition that  $\sup_{X_{\varepsilon}}(\nu_{\varepsilon}^*\varphi_{\varepsilon}+\hat{\theta_{\varepsilon}}+
\hat{\tau_{\varepsilon}})=0$.

We now work again on $X$ (e.g. by taking direct images to construct a sequence of singular metrics on $X$). Consider $\theta_{\varepsilon}:= \nu_{\varepsilon*} \hat{\theta}_{\varepsilon}$ and
$\tau_{\varepsilon}:=  \nu_{\varepsilon*} \hat{\tau}_{\varepsilon} \in L^1_{\loc}(X)$.
Define $\Phi_{\varepsilon}:= \varphi_{\varepsilon}+\theta_{\varepsilon}+\tau_{\varepsilon}$. This is a quasi psh potential on $X$ since
it satisfies the condition
$$\nu_{\varepsilon}^*(i \Theta_{L,h}+\varepsilon \omega +i \d\dbar \Phi_{\varepsilon})
=[D_{\varepsilon}]+\sum \eta_l [E_l]+\hat{\beta_{\varepsilon}}+i \d\dbar \hat{\tau_{\varepsilon}} \geq 0.$$
Define $\tilde{Z_{\varepsilon}}:= \nu_{\varepsilon}(D_{\varepsilon})$ which includes $Z_{\varepsilon}$ since the support of the divisor $D_{\varepsilon}$ includes all components of the exceptional divisor by Hironaka theorem \cite{Hir64}.
By construction, $\Phi_{\varepsilon}$ is smooth on $X \setminus \tilde{Z_{\varepsilon}}$.
By the normalisation condition we have $\sup_X \Phi_{\varepsilon}=0$.
Since $i \Theta_{L,h} +\varepsilon \omega+i \d \dbar \Phi_{\varepsilon}$ is a family of $(1,1)$-forms in a bounded family of cohomology classes, with the above normalisation, we have,  up to taking a subsequence, that the family of quasi-psh potentials $\Phi_{\varepsilon}$ converges almost everywhere to $\Phi \in L^1(X)$ by weak compactness.
It satisfies that 
$$i \Theta_{L,h}+ i \d \dbar \Phi \geq 0.$$

We also have that 
$$\nu_{\varepsilon}^* \mathbb{1}_{X \setminus \tilde{Z_{\varepsilon}}}(i \Theta_{L,h}+i \d\dbar \Phi_{\varepsilon} + \varepsilon \omega)^n \geq \hat{\beta_{\varepsilon}}^n \geq \frac{c}{3} \varepsilon^{n-l} \nu_{\varepsilon}^* \omega^n.$$
In other words, on $X \setminus \tilde{Z_{\varepsilon}}$
$$(i \Theta_{L,h}+i \d\dbar \Phi_{\varepsilon} + \varepsilon \omega)^n \geq \frac{c}{3} \varepsilon^{n-l} \omega^n. $$
 
To use the Bochner-Kodaira-Nakano inequality, we need to change the K\"ahler metric in such a way that $X\setminus \tilde{Z}_{\varepsilon}$ becomes a complete manifold. 
We define a family of K\"ahler metrics $\omega_{\varepsilon,\delta}:=\omega+\delta (i\d\dbar \psi_{\varepsilon}+\omega)$, 
for   $\delta >0$ which is complete metrics on $X \setminus \tilde{Z}_{\varepsilon}$,
 where $\psi_{\varepsilon}$ is a quasi-psh function on $X$ with $\psi_{\varepsilon}=-\infty$ on $\tilde{Z}_{\varepsilon}$, $\psi_{\varepsilon}$ smooth on $X \setminus \tilde{Z}_{\varepsilon}$ and   $i\ddbar \psi_{\varepsilon}+\omega \geq 0$ (see e.g. \cite{Dem82}, Th\'eor\`eme 1.5).
 
Here we choose $\psi_{\varepsilon}$ more explicit for better control. Since we will use the Bochner-Kodaira-Nakano inequality on $X\setminus \tilde{Z}_{\varepsilon}$, to simplify the notations, we identify it with $X_{\varepsilon} \setminus \mathrm{Supp} (D_{\varepsilon})$.
We define 
$$\psi_{\varepsilon}:=-\sqrt {- \nu_{\varepsilon}^* \varphi_{\varepsilon}-C}$$ with $ C \in \R$ such that 
$\sup_{X_{\varepsilon}}  \nu_{\varepsilon}^* \varphi_{\varepsilon}+C=-1$.
Now $\psi_{\varepsilon}$ satisfies the condition of \cite{Dem82}, Th\'eor\`eme 1.5 following its calculation.

We want to prove that $e^{\Phi_{\varepsilon}}dV_{\omega_{\varepsilon, \delta}}$ is a current on $X_{\varepsilon}$. 
Since $X_{\varepsilon}$ is compact, it has finite mass on $X_{\varepsilon}$.
In particular, it has finite mass on $X_{\varepsilon} \setminus \mathrm{Supp} (D_{\varepsilon})$.
It is enough to prove that $e^{\Phi_{\varepsilon}} (i \d \dbar \psi_{\varepsilon})^p$ defines a current on $X_{\varepsilon}$ for any $p >0$.
More precisely, we prove that $e^{ \nu_{\varepsilon}^* \varphi_{\varepsilon}} (i \d \dbar \psi_{\varepsilon})^p$ defines a current on $X_{\varepsilon}$ for any $p >0$.
Since 
$$i \d \dbar \psi_{\varepsilon}=\frac{ i \d \dbar \nu_{\varepsilon}^* \varphi}{2\psi_{\varepsilon}}+\frac{ i \d \nu_{\varepsilon}^* \varphi \wedge \dbar \nu_{\varepsilon}^* \varphi}{4 \psi_{\varepsilon}},$$
it is equivalent to prove that 
$e^{\nu_{\varepsilon}^* \varphi} (i \d \dbar \nu_{\varepsilon}^* \varphi)^p \wedge (i \d \nu_{\varepsilon}^* \varphi \wedge \dbar \nu_{\varepsilon}^* \varphi)^q$
($p,q \geq 0$)
defines a current.
By anti-commutativity, we can assume $q$ is either 0 or 1.
$$e^{\frac{\nu_{\varepsilon}^* \varphi_{\varepsilon}}{n}} i \d \dbar \nu_{\varepsilon}^*  \varphi_{\varepsilon}= e^{\frac{\nu_{\varepsilon}^*  \varphi_{\varepsilon}}{n}} ([D_{\varepsilon}]+\mathrm{smooth} \; \mathrm{terms})=e^{\frac{\nu_{\varepsilon}^*  \varphi_{\varepsilon}}{n}} \mathrm{smooth} \; \mathrm{terms}$$
since  $\nu_{\varepsilon}^*  \varphi_{\varepsilon}$ vanishes along $D_{\varepsilon}$.
Thus it is smooth on $X_{\varepsilon}$ vanishing along $D_{\varepsilon}$.

On the other hand, in local coordinates, 
$$\nu_{\varepsilon}^*  \varphi_{\varepsilon}=\sum \alpha_i \log(|z_i|^2)$$
with $\alpha_i >0$.
So 
$$e^{\frac{\nu_{\varepsilon}^* \varphi_{\varepsilon}}{n}} i \d \nu_{\varepsilon}^* \varphi_{\varepsilon} \wedge \dbar \nu_{\varepsilon}^*  \varphi_{\varepsilon}=\prod |z_k|^{\frac{2 \alpha_k}{n}} i \sum \frac{\alpha_i dz_i}{z_i} \wedge \sum \overline{\frac{\alpha_j dz_j}{z_j}}$$
has all coefficients in $L^1_{\loc}$. 
(This is because $\alpha_k >0$ for any $k$, although a priori, the derivative of a quasi-psh function is not necessarily in $L^2_{\loc}$.)
Hence the current is well defined as a wedge product of locally integrable functions and smooth forms.

In conclusion, we have $\int_{X\setminus Z_{\varepsilon}} e^{\Phi_{\varepsilon}} dV_{\omega_{\varepsilon,\delta}}$ is finite and uniformly bounded for $\delta$ small enough.

\begin{myrem}
{\rm
Let us indicate an alternative argument in a more general situation, following
a suggestion by Demailly. It is not necessary for our proof, but may be
interesting for other uses.
Let $(X, \omega)$ be a compact Hermitian manifold and $D$ a SNC divisor in $X$.
Let $u \in H^0(X,\mathcal{C}^0_{p,q,X} \otimes L)$ be a $(p,q)$ continuous forms with value in some line bundle $(L,h)$ endowed with some continuous metric $h$.
In this remark, we construct a family of complete metrics $\omega_{\delta}$ on $X \setminus D$ such that $\omega_{\delta}$ decreasing to $\omega$ as $\delta \to 0$ and
$$\int_{X\setminus D} |u|^2_h dV_{\omega_{\delta}} \leq C$$
where $C$ is a universal constant independent of $\delta$.

To begin with, we recall some facts about the local model: the Poincar\'e metric on the punctured disk.
The Poincar\'e metric on $\H:=\{z \in \C| \Im(z)>0 \}$ is given by $\frac{i dz \wedge \overline{dz}}{|\Im z|^2}$.
There exists an infinite cover from $\mathcal{H}$ to $\mathcal{D}^*=\{z \in \C| |z|<1\}$ given by $z \mapsto e^{iz}$.
The Poincar\'e metric on $\mathcal{H}$ is the pull back of the Poincar\'e metric on $\mathcal{D}^*$ given by
$$\frac{i dz \wedge \overline{dz}}{|z|^2\,|\log(|z|)|^2}.$$
Since the Poincar\'e metric on $\mathcal{H}$ is complete and the cover is locally diffeomorphism, the Poincar\'e metric on $\mathcal{D}^*$ is also geodesic complete.
It is well known that the Poincar\'e metric is of volume finite near the origin:
$$\int_{0<|z|<\frac{1}{2}}\frac{i dz \wedge \overline{dz}}{|z|^2||\log(|z|)|^2}=\int_0^{2 \pi} d\theta \int_{0}^{\frac{1}{2}} \frac{dr}{r (\log(r))^2}=\frac{2 \pi}{\log 2} < \infty.$$
Now we return to the construction of our metrics.
Let $U_{\alpha}$ be a finite system of coordinate charts of $X$($X$ is compact) such that for any $U_{\alpha}$ such that
$U_{\alpha} \cap D \neq \emptyset$, (we denote the set of all such indices as $I$) we have in this coordinate chart
$$U_{\alpha} \cap D=\{z_1=\cdots=z_r=0\},$$
$$U_{\alpha} \subset \{ |z_i|<1, \forall i\}.$$
This is possible since $D$ is a SNC divisor.
Let $\chi_{\alpha}$ be a partition of unity adapted to this cover.
Define  the family of metric $\omega_{\delta}$ on $X \setminus D$ as follows:
$$g_{\alpha}:=\sum_{i=1}^r \frac{i dz_i \wedge \overline{dz_i}}{|z_i|^2\,|\log(|z_i|)|^2}+\sum_{i=r+1}^n i dz_i \wedge \overline{dz_i}$$
$$\omega_{\delta}:= \omega +\delta \sum_{\alpha \in I} \chi_{\alpha} g_{\alpha}.$$
The sum converges since we take finite sums.
We have by construction $\omega_{\delta} \geq \omega$ decreasing to $\omega$.
By a similar calculation to the one made above, we have 
$$\int_{X\setminus D} |u|^2_h dV_{\omega_{\delta}} \leq C.$$
We remark that $|u|^2_h$ a priori depends on $\omega_\delta$.
However in a local chart $U_{\alpha}(\alpha \in I)$, we can write
$$u=\sum_{J,K, |J|=p, |K|=q} u_{J,K} dz^J \wedge \overline{dz^K}$$ using $J$(resp. $K$) for multi index of length $p$ (resp. $q$).
Denote for $1 \leq i \leq n$ and a multi index $I$, $\delta_{i I}=1$ if $i \in I$ and $\delta_{i I}=0$ if $i \notin I$.
Then we have
$$|u|^2_{g_{\alpha},h}=\sum_{J,K} \prod_{i=1}^n (|z_i|\;|\log |z_i||)^{\delta_{iJ}} \prod_{i=1}^n (|z_i|\;|\log |z_i||)^{\delta_{iK}} |u_{JK}|^2 e^{-2 \varphi_{\alpha}}$$
where $\varphi_{\alpha}$ is the weight function of $h$ on $U_{\alpha}$ (i.e. $h=e^{-2\varphi_{\alpha}}$ on $U_{\alpha}$).
Hence $|u|^2_{g_{\alpha},h}$ is uniformly bounded since
$|z_i||\log |z_i|| $ is bounded for $|z_i|<1$ and all terms are continuous.

It remains to prove that $\omega_{\delta}$ is complete.
We prove it by contradiction.
Let $\gamma(t)$ be a geodesic of $\omega_{\delta}$ with natural parametrization for $\delta >0$ whose maximal defining interval is $]t_0,t_1[$ with $t_1 <\infty$.
By property of ordinary differential equation
(the solution goes outside any compact subset), the adherent point(s) must be contained in $D$ with respect to the background topology of $X$. Since $X$ is compact, there exists a sequence $\gamma(t_{\nu}) \to x\in D$ with $t_{\nu} \to t_1$, $t_{\nu} <t_1$. Up to taking a subsequence we can assume that such a sequence is contained in some chart $U_{\alpha_0}$.
Then $|\gamma'(t)|_{g_{\alpha_0}} \leq \delta|\gamma'(t)|_{\omega_{\delta}}=\delta$
where the second equality is from the fact that $\gamma(t)$ be a geodesic of $\omega_{\delta}$. Hence $\gamma(t_{\nu})$ is a Cauchy sequence with respect to $g_{\alpha}$.
Since the Poincar\'e type metric $g_{\alpha}$ is complete, the limit $x \in U_{\alpha_0} \setminus D$ exists, which gives a contradiction.
}
\end{myrem}

We recall the Bochner-Kodaira-Nakano inequality in the non compact case.
\begin{mythm}
Let $h$ be a smooth hermitian metric on $L$ over $(X, \omega)$ a complete K\"ahler manifold.
We assume that  the  curvature  possesses  a  uniform  lower  bound
$$i \Theta_{L,h} \geq -C \omega.$$
Then for an arbitrary $(p,q)$-form $u \in C^{\infty}(X,\wedge^{p,q}T^{*}_X \otimes L)$  which is $L^2$ integrable, the following basic a priori inequality holds
$$\parallel \dbar u \parallel^2+\parallel \dbar^* u \parallel^ 2 \geq \int_X\langle [i \Theta_{L,h},\Lambda]u,u \rangle dV_{\omega}.$$
\end{mythm}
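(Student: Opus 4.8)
The plan is to prove the inequality first for smooth \emph{compactly supported} sections, where it follows at once from the Bochner--Kodaira--Nakano identity, and then to extend it to every $L^2$ form by a cutoff argument that uses the completeness of $\omega$ in an essential way. Throughout we may assume $\dbar u$ and $\dbar^* u$ are themselves $L^2$, since otherwise the left-hand side is infinite and there is nothing to prove.

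First I would recall the pointwise Bochner--Kodaira--Nakano identity. Writing $D=D'+\dbar$ for the Chern connection of $(L,h)$, the K\"ahler condition (vanishing torsion) gives the operator identity
$$\Delta_{\dbar}=\Delta_{D'}+[i\Theta_{L,h},\Lambda]$$
on $\wedge^{p,q}T^*_X\otimes L$. For a section $v\in C^\infty_c(X,\wedge^{p,q}T^*_X\otimes L)$ the compact support allows integration by parts with no boundary contribution, and pairing the identity with $v$ yields
$$\parallel\dbar v\parallel^2+\parallel\dbar^* v\parallel^2=\parallel D'v\parallel^2+\parallel (D')^* v\parallel^2+\int_X\langle[i\Theta_{L,h},\Lambda]v,v\rangle\,dV_\omega.$$
Dropping the two nonnegative terms on the right gives the desired inequality for all compactly supported $v$.

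Next I would invoke completeness to produce cutoffs. Since $(X,\omega)$ is complete there exist $\chi_\nu\in C^\infty_c(X)$ with $0\le\chi_\nu\le 1$, $\chi_\nu\nearrow 1$ pointwise, and $\sup_X|d\chi_\nu|_\omega\to 0$; concretely one takes $\chi_\nu=\rho(\psi/\nu)$ for a fixed profile $\rho$ and a distance-type exhaustion $\psi$ with $|d\psi|_\omega\le 1$, whose existence is exactly geodesic completeness. Applying the compactly supported inequality to $v=\chi_\nu u$, and using that $[i\Theta_{L,h},\Lambda]$ is a pointwise algebraic operator (so the right-hand integrand is simply $\chi_\nu^2\langle[i\Theta_{L,h},\Lambda]u,u\rangle$), it remains to pass to the limit. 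On the left, the commutators $[\dbar,\chi_\nu]$ and $[\dbar^*,\chi_\nu]$ are pointwise algebraic operators of norm $\le|d\chi_\nu|_\omega$, so that
$$\dbar(\chi_\nu u)=\chi_\nu\dbar u+[\dbar,\chi_\nu]u,\qquad \dbar^*(\chi_\nu u)=\chi_\nu\dbar^* u+[\dbar^*,\chi_\nu]u;$$
the principal terms $\parallel\chi_\nu\dbar u\parallel^2+\parallel\chi_\nu\dbar^* u\parallel^2$ converge to $\parallel\dbar u\parallel^2+\parallel\dbar^* u\parallel^2$ by monotone convergence, while every remainder term is bounded through Cauchy--Schwarz by $\sup_X|d\chi_\nu|_\omega$ times the $L^2$ norms of $u,\dbar u,\dbar^* u$ and hence tends to $0$. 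On the right, the integrand $\langle[i\Theta_{L,h},\Lambda]u,u\rangle$ is controlled once $i\Theta_{L,h}$ is two-sided bounded relative to $\omega$ (as in our application, where $h$ is a fixed smooth metric on the compact model and $\omega$ dominates a fixed background metric), so it is dominated by $C|u|^2\in L^1$; then $\int_X\chi_\nu^2\langle[i\Theta_{L,h},\Lambda]u,u\rangle\,dV_\omega\to\int_X\langle[i\Theta_{L,h},\Lambda]u,u\rangle\,dV_\omega$ by monotone convergence on the positive part and dominated convergence on the negative part, and combining the two limits gives the stated inequality.

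The main obstacle is precisely this limit passage, and within it the construction of cutoffs with $\sup_X|d\chi_\nu|_\omega\to 0$: this is the one place where completeness is indispensable, since on a non-complete manifold the remainder terms $[\dbar,\chi_\nu]u$ and $[\dbar^*,\chi_\nu]u$ need not be negligible and the integration-by-parts step of the second paragraph may acquire boundary contributions. The two secondary verifications are that for the compactly supported forms $\chi_\nu u$ the Hilbert-space adjoint of $\dbar$ coincides with the formal adjoint $\dbar^*$ (immediate from the absence of boundary terms), and that the right-hand integrand is dominated as above, so that the limit on the curvature side is legitimate.
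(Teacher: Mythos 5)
Your argument is correct and follows essentially the same route as the paper: establish the inequality for compactly supported forms via the classical Bochner--Kodaira--Nakano identity, use completeness to approximate a general $L^2$ form by compactly supported ones in the graph norm (the paper's ``truncating and convolving'' is exactly your cutoff construction, which you spell out in more detail), and pass to the limit with the curvature lower bound controlling the right-hand side. Your closing remark that only Fatou/monotone convergence on the positive part and dominated convergence on the negative part are needed is in fact the precise justification, since the hypothesis $i\Theta_{L,h}\geq -C\omega$ bounds the integrand only from below; the two-sided bound you mention is not required.
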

\begin{proof}
For $u$ with compact support, the inequality is just the classical one. When $u$ is just $L^2$-integrable case, since $(X,\omega)$ is assumed to be complete, there exists a sequence of smooth forms $u_{\nu} $ with compact support in $X$ (obtained for example by truncating $u$ and taking the convolution with a regularizing kernel)  such  that $u_{\nu} \to u$ in $L^2$ and such  that $\dbar u_{\nu} \to \dbar u,\dbar^* u_{\nu} \to \dbar^* u$ in $L^2$.  

By our curvature assumption the term on the right is controlled by $C |u|^2$ which is $L^2$.
We thus get the inequality by passing to the limit, using Lebesgue’s dominated convergence theorem.
\end{proof}

We now return to the proof of the Bogomolov vanishing theorem.

Let $u$ be a holomorphic p-form with value in $L^{-1}$. We take the metric induced from $(L, h e^{- \Phi_{\varepsilon}})$.
The Bochner-Kodaira-Nakano inequality on the complete manifold $(X \setminus \tilde{Z}_{\varepsilon}, \omega_{\varepsilon,\delta})$ gives
$$0 \geq \int_{X\setminus Z_{\varepsilon} }\langle [i \Theta_{L,h},\Lambda]u,u \rangle e^{\Phi_{\varepsilon}} dV_{\omega_{\varepsilon,\delta}},$$
by using the degree condition and the fact that the form is holomorphic.
We remark that the form is $L^2$-integrable by the above discussion and the fact that $u$ has globally bounded coefficients on $X$ (hence on $X \setminus \tilde{Z}_{\varepsilon}$).

Let us observe that by \cite{Dem82} Lemma 3.2,  $(p,0)$-forms get larger $L^2$ norms as the metric increases.
In other words, in bidegree $(p,0)$, the space $L^2(\omega)$  has  the  weakest  topology  of  all  spaces $L^2(\omega_{\varepsilon,\delta})$. 
Indeed,  an  easy  calculation  made  in the above lemma yields
$$|f|^2_{\wedge^{p,0} \omega \otimes h}dV_{\omega} \leq |f|^2_{\wedge^{p,0}\omega_{\varepsilon, \delta}\otimes h}dV_{\omega_{\varepsilon,\delta}}$$
if $f$ is of type $(p,0)$.
By Lebesgue's dominated convergence theorem, we have
$$0 \geq \int_{X\setminus Z_{\varepsilon} }\langle [i \Theta_{L,h},\Lambda]u,u \rangle e^{\Phi_{\varepsilon}} dV_{\omega_{}}$$
by taking $\delta \to 0$.

The rest part of the proof follows in general the proof of \cite{Mou}.
 
Let $-\varepsilon \leq \lambda_1^{\varepsilon} \leq \cdots \leq \lambda_n^{\varepsilon}$ the eigenvalues of $i\Theta_{L,h_{\varepsilon}}$ with respect to $\omega$ on $X \setminus \tilde{Z}_{\varepsilon}$.

Then we have
$$\int_{X \setminus \tilde{Z}_{\varepsilon}} (\lambda_n^{\varepsilon}+\varepsilon) dV_{\omega_{}} \leq \int_{X \setminus \tilde{Z}_{\varepsilon}} ((\lambda_1^{\varepsilon}+\varepsilon)+\cdots+(\lambda_n^{\varepsilon}+\varepsilon)) dV_{\omega_{}}$$
$$\leq \int_{X \setminus \tilde{Z}_{\varepsilon}} (i\Theta_{L,h_{\varepsilon}}+\varepsilon \omega_{}) \wedge \frac{\omega_{}^{n-1}}{(n-1)!}$$
$$\leq \int_{X } (i\Theta_{L,h_{\varepsilon}}+\varepsilon \omega_{}) \wedge \frac{\omega_{}^{n-1}}{(n-1)!}= \int_{X } (c_1(L)+\varepsilon \omega_{}) \wedge \frac{\omega_{}^{n-1}}{(n-1)!}$$
$$\leq \int_{X } (c_1(L)+ \omega_{}) \wedge \frac{\omega_{}^{n-1}}{(n-1)!}=:A.$$
Let $\delta >0$ such that
$$\nu := \frac{n-l}{n-l+1}+ \delta \frac{l-1}{n-l+1} <1.$$
Hence $V_{\varepsilon}:= \{x \in X \setminus \tilde{Z}_{\varepsilon}| \lambda_n^{\varepsilon} +\varepsilon \geq A \varepsilon^{-\delta}\}$ has volume smaller that $\varepsilon^{\delta} \int_X \omega^n$.

On the other hand, by the Monge-Amp\`ere equation, on $X \setminus \tilde{Z}_{\varepsilon}$ we have
$$\prod_{i=1}^n (\lambda_i^{\varepsilon}+\varepsilon)\geq \frac{c}{3}\varepsilon^{n-l}.$$ 
Hence on $X \setminus (V_{\varepsilon} \cup \tilde{Z}_{\varepsilon})$ we have
\begin{align*}
   \lambda_{n-l+1}^{\varepsilon}+\varepsilon & \geq  ((\lambda_{n-l+1}^{\varepsilon}+\varepsilon)\cdots (\lambda_{1}^{\varepsilon}+\varepsilon))^{\frac{1}{n-l+1}} \\
   & \geq c \varepsilon^{\frac{n-l}{n-l+1}} (\lambda_{n}^{\varepsilon}+\varepsilon)^{\frac{l-1}{n-l+1}} \\
      & \geq c \varepsilon^{\nu}.
\end{align*}
Combining this with the Bochner-Kodaira-Nakano inequality, we find
$$0 \geq \int_{X\setminus \tilde{Z}_{\varepsilon}} (\lambda_{1}^{\varepsilon}+\cdots +\lambda_{n-l+1}^{\varepsilon}+\cdots+\lambda_{n-p}^{\varepsilon})|u|^2_{L^{-1},h^{-1}} e^{\Phi_{\varepsilon}} dV_{\omega}$$
$$\geq \int_{X \setminus (\tilde{Z}_{\varepsilon} \cup V_{\varepsilon})}(c\varepsilon^{\nu}-(n-p)\varepsilon)|u|^2_{L^{-1},h^{-1}} e^{\Phi_{\varepsilon}}dV_{\omega}+\int_{ V_{\varepsilon}}-(n-p)\varepsilon |u|^2_{L^{-1},h^{-1}} e^{\Phi_{\varepsilon}}dV_{\omega}.$$
In other words,
$$\int_{X\setminus \tilde{Z}_{\varepsilon}} |u|^2_{L^{-1},h^{-1}} e^{\Phi_{\varepsilon}}dV_{\omega} \leq (1+\frac{n-p}{c\varepsilon^{\nu}-(n-p)}) \int_{ V_{\varepsilon}} |u|^2_{L^{-1},h^{-1}} e^{\Phi_{\varepsilon}}dV_{\omega}$$
$$\leq C \int_{V_{\varepsilon}} \omega^n \leq C \varepsilon^{\nu},$$
where we use that $\Phi_{\varepsilon}$ is uniformly bounded from above.
Since $\tilde{Z}_{\varepsilon}$ is of Lebesgue measure 0,
$$\int_{X\setminus \tilde{Z}_{\varepsilon}} |u|^2_{L^{-1},h^{-1}} e^{\Phi_{\varepsilon}}dV_{\omega}=\int_{X} |u|^2_{L^{-1},h^{-1}} e^{\Phi_{\varepsilon}}dV_{\omega}.$$
Again by Lebesgue's dominated convergence theorem (there is an upper bound by constant), we have
$$\int_{X} |u|^2_{L^{-1},h^{-1}} e^{\Phi}dV_{\omega} \leq 0$$
by taking $\varepsilon \to 0$. This implies that $u=0$ and finishes the proof of the Bogomolov vanishing theorem.
\begin{myrem}
{\rm
In example 1.7 of \cite{DPS94}, we consider a nef line bundle $\cO(1)$ over the projectivisation of the unique non-trivial rank 2 vector bundle as extension of two trivial line bundle over an elliptic curve.
An explicit calculation shows that there exists a unique singular positive metric on $\cO(1)$ whose curvature is the current associated to a smooth curve.
Hence in this example $e(\cO(1))=0$.
But the numerical dimension is $\nd(\cO(1))=1$ since the line bundle is non trivial and not big.
In fact, $(\cO(1))^2=0$.
}
\end{myrem}
\begin{myrem}
{\rm
Our Bogomolov vanishing theorem can be reformulated as follows:

The sheaf of holomorphic $p$-forms over $X$ has no subsheaf of rank one associated to a psef line bundle of numerical dimension strictly larger than $p$.
}
\end{myrem}
According to the fundamental work of Campana \cite{C04} \cite{C11} on special manifolds, the above results suggest to give the following variant of Campana's definition.
\begin{mydef}
Let $L \subset \Omega^p_X$ be a saturated,  coherent  and  rank  one subsheaf. We call it a ``numerical Bogomolov sheaf'’ of $X$ if $\nd(X,L) =p >0$.

We say that $X$ is ``numerically special'' if it has no Bogomolov sheaf.  A compact complex  analytic  space  is  said  to  be  ``numerically special''  if  some  (or  any)  of  its resolutions is ``numerically special''.
\end{mydef}
\begin{myrem}
{\rm
It is conjectured by Campana that specialness is equivalent to the numerical specialness defined here. 

One possibility to address Campana's conjecture would be study the following statement of the Bogomolov vanishing theorem incorporating the numerical dimension instead of the Kodaira-Iitaka dimension:\medskip

{\it For a numerical Bogomolov subsheaf, does there exist a fibration $f:X \to Y$ such  that $L=f^*(K_Y)$ over  the  generic  point  of $Y$ (i.e., $L$ and $f^*(K_Y)$ have the same saturation in $\Omega^p_X$)~?}\medskip

In case the Kodaira dimension case is used, the existence of the fibration comes directly from the Kodaira-Iitaka morphism. However, in case one uses the numerical dimension instead, the existence of the fibration is not guaranteed, i.e.\ there are examples of non abundant numerical Bogomolov sheaves. One can take for instead $X$ to be a Hilbert modular surface obtained as a smooth quotient ${\mathbb D}\times {\mathbb D}/\Gamma$ with an irreducible subgroup $\Gamma\subset{\rm Aut}({\mathbb D})\times{\rm Aut}({\mathbb D})$ (in such a way that no subgroup of finite index of $\Gamma$ splits). It is equipped with two natural foliations ${\mathcal F}$, ${\mathcal G}$ coming from the two factors ${\mathbb D}$, and
$T_X={\mathcal F}\oplus{\mathcal G}$. Then one can check that
${\mathcal F}^*,{\mathcal G}^*\subset\Omega^1_X$ satisfy
$\nd({\mathcal F}^*)=\nd({\mathcal G}^*)=1$, but
$\kappa({\mathcal F}^*)=\kappa({\mathcal G}^*)=-\infty$ (see e.g. \cite{Br03}).
}
\end{myrem}
\textbf{Acknowledgment} I thank Jean-Pierre Demailly, my PhD supervisor, for his guidance, patience and generosity. 
I would like to thank Andreas Höring for some very useful suggestions on the previous draft of this work.
I would also like to express my gratitude to colleagues of Institut Fourier for all the interesting discussions we had. This work is supported by the PhD program AMX of \'Ecole Polytechnique and Ministère de l'Enseignement Supérieur et de la Recherche et de l’Innovation, and the European Research Council grant ALKAGE number 670846 managed by J.-P. Demailly.
  
\end{document}